\newtheorem{theorem}{Theorem}
\newtheorem{lemma}{Lemma}
\newtheorem{corollary}{Corollary}
\begin{document}
\author{Davit Baramidze, Zura dvalashvili and Giorgi Tutberidze}
\title[ N\"orlund means]{Convergence of N\"orlund means with respect to Vilenkin systems of integrable functions}
\address{D. Baramidze, The University of Georgia, School of science and
	technology, 77a Merab Kostava St, Tbilisi 0128, Georgia and Department of
	Computer Science and Computational Engineering, UiT - The Arctic University
	of Norway, P.O. Box 385, N-8505, Narvik, Norway.}
\email{davit.baramidze@ug.edu.ge }
\address{Z. dvalashvili, The University of Georgia, School of science and
	technology, 77a Merab Kostava St, Tbilisi 0128, Georgia.}
\email{zurabdvalashvili@gmail.com}
\address{G. Tutberidze, The University of Georgia, School of science and
	technology, 77a Merab Kostava St, Tbilisi 0128, Georgia.}
\email{g.tutberidze@ug.edu.ge }
\thanks{The research was supported by Shota Rustaveli National Science
	Foundation grant no. FR-19-676.}
\date{}
\maketitle

\begin{abstract}
In this paper we derive converge of N\"orlund means of Vilenkin-Fourier
series with monotone coefficients of integrable functions in Lebesgue and
Vilinkin-Lebesgue points. Moreover, we discuss pointwise convergence and convergence in $L_p$ norms of such Nörlund means.
\end{abstract}

\bigskip \textbf{2000 Mathematics Subject Classification.} 42C10, 42B25.

\textbf{Key words and phrases:} Vilenkin systems, Vilenkin groups, N\"orlund  means,  a.e. convergence, Lebesgue points, Vilenkin-Lebesgue points.

\section{Introduction}

\bigskip\ 

It is well-known (see e.g. the books \cite{PTWeisz} and \cite{sws}) that there exists an
absolute constant $c_{p},$ depending only on $p,$ such that 
\begin{equation*}
\left\Vert S_{n}f\right\Vert _{p}\leq c_{p}\left\Vert f\right\Vert _{p},%
\text{ \ when \ }p>1.
\end{equation*}%
On the other hand, (for details see \cite{BPT1,PTW3,tep6,tep7,tep8,tep9,Tut1}) boundedness does not hold for $p=1.$ The analogue of Carleson's theorem for Walsh system was proved by Billard 
\cite{Billard1967} for $p=2$ and by Sjölin \cite{sj1} for $1 <p<\infty$,
while for bounded Vilenkin systems by Gosselin \cite{goles}. For
Walsh-Fourier series, Schipp \cite{sws} gave a proof by using methods
of martingale theory. A similar proof for Vilenkin-Fourier series can be
found in Schipp and Weisz \cite{s3,wk} (see also \cite{PSTW} and \cite{PTWeisz}). In each proof, they show that 
maximal operator of partial sums is bounded on $L_p$, i.e., there exists
an absolute constant $c_p$ such that 
\begin{equation*}
\left\Vert S^{\ast }f\right\Vert_p\leq c_p\left\Vert f\right\Vert_p,\text{ \
	when \ }f\in L_p,\text{ \ } p>1.
\end{equation*}
Hence, if $f\in L_p(G_m),$ where $p>1,$ then 
$
S_{n}f\to f, \ \ \text{a.e. on } \ \ G_m.
$
Stein \cite{steindiv} constructed the integrable function whose
Vilenkin-Fourier (Walsh-Fourier) series diverges almost everywhere. In \cite%
{sws} was proved that there exists an integrable function whose
Walsh-Fourier series diverges everywhere. The a.e convergence of subsequences of Vilenkin-Fourier series of integrable functions was considered in 
\cite{bnpt}, where was used methods of martingale Hardy spaces. If we
consider the following restricted maximal operator $\widetilde{S}%
_{\#}^{\ast}f:=\sup_{n\in\mathbb{N}}\left\vert S_{M_n}f\right\vert,$  we
have weak (1,1) type inequality for $f\in L_1(G_m).$  That is,
\begin{eqnarray*}
	\lambda \mu\left\{ \widetilde{S}_{\#}^{\ast}f>\lambda \right\} \leq
	c\left\Vert f\right\Vert_{1}, \ \ \ f\in L_1(G_m), \ \ \lambda>0.
\end{eqnarray*}
Hence, if $f\in L_1(G_m),$ then $S_{M_n}f\to f, \ \text{a.e. on } \  G_m.$
Moreover, for any integrable function it is known that a.e. point is
Lebesgue point and for any such point $x$ of integrable function $f$ we
have that 
\begin{equation}  \label{smnvl}
S_{M_n}f(x)\to f(x), \ \text{as} \ n\to\infty, \ \text{for any such point} \ x
\ \text{of } \ f\in L_1(G_m).
\end{equation}

In the one-dimensional case Yano \cite{Yano} proved that 
\begin{equation*}
\left\Vert \sigma _{n}f-f\right\Vert _{p}\rightarrow 0,\text{ \ \ \ as \ \
	\ \ }n\rightarrow \infty ,\text{ \ }(f\in L_p(G_m),\text{ \ }1\leq p\leq
\infty ).
\end{equation*}

If we consider the maximal operator of F\'ejer means
$$\sigma^{\ast}f:=\sup_{n\in\mathbb{N}}\left\vert \sigma_{n}f\right\vert,  
$$
then 
\begin{eqnarray*}
	\lambda \mu\left\{ \sigma ^{\ast}f>\lambda \right\} \leq c\left\Vert
	f\right\Vert_{1}, \ \ \ f\in L_1(G_m), \ \ \lambda>0.
\end{eqnarray*}
This result can be found in Zygmund \cite{Zy} for the trigonometric series,
in Schipp \cite{Sc} (see also \cite{s1,s2} and \cite{BT1,GNT,pt,PTT,PTT2,tep1,tep10,tep11,tep12,tep13}) for Walsh series and in P\'al, Simon \cite{PS} for bounded Vilenkin series. The boundedness does not hold from Lebesgue space $L_1(G_m)$ to the space $
L_1(G_m)$. The  $\text{weak}-(1,1)$ type inequality  follows that for any $f\in
L_1(G_m),$ 
\begin{equation*}
\sigma_nf(x)\to f(x), \ \ \ \text{a.e., \ \ \ as} \ \ \ n\to\infty.
\end{equation*}
Moreover (for details see \cite{goggog}), for any integrable function it is
known that a.e. point is Vilenkin-Lebesgue points and for any such point $x$
of integrable function $f$ we have that $\sigma_nf(x)\to f(x), \ \ \text{as}
\ \ n\to\infty.$

It is also well-known (for details see \cite{sws}) that the maximal operator  $\sigma ^{\alpha,\ast }$ of Ces?ro means is bounded from the Lebesgue space $L_{1}$ to the space weak-$L_{1}$. It follows that $\sigma ^{\alpha }_nf(x)\to f(x), \ \ \ \text{a.e., as} \ \ \ n\to\infty$ for any $f\in L_1(G_m).$ The maximal operator 
$\sigma^{\alpha ,\ast }$ $\left( 0<\alpha <1\right) $  with respect to Vilenkin systems was also investigated by Weisz \cite{We1}
(see also \cite{BT,BTT} and \cite{We3,We2}).

In \cite{Ga2} Gát and Goginava proved some convergence and divergence
properties of the N\"orlund logarithmic means of functions in the Lebesgue
space $L_{1}.$ In particular,  (see also \cite{BPT,PTW2,tep2,TT1}) they 
proved that there exists an function in the space $L_1, $ such that $
\sup_{n\in \mathbb{N}}\left\Vert L_nf\right\Vert_1=\infty.$ However,
Goginava \cite{gog2} proved that $\left\Vert L_{2^n}f\right\Vert _{1}\leq
c\left\Vert f\right\Vert _{1}, \ \ f\in L_1, \ \ n\in \mathbb{N}. $
Moreover, if we consider restricted maximal operator 
\begin{equation*}
\widetilde{L}_{\#}^{\ast}f:=\sup_{n\in\mathbb{N}}\left\vert
L_{M_n}f\right\vert
\end{equation*}
of N\"orlund means, then Goginava \cite{gog2} proved that 
\begin{eqnarray*}
	\lambda \mu\left\{\widetilde{L}_{\#}^{\ast}f>\lambda \right\} \leq
	c\left\Vert f\right\Vert_{1}, \ \ \ f\in L_1(G_m), \ \ \lambda>0.
\end{eqnarray*}
It follows that for any $f\in L_1(G_m),$ 
$
L_{M_n}f(x)\to f(x), \ \ \ \text{a.e., as} \ \ \ n\to\infty.
$

M\'oricz and Siddiqi \cite{Mor} investigate  approximation properties of
some special N\"orlund means of Walsh-Fourier series of $L_{p}$ functions in
norm. Similar results for the two-dimensional case can be found in Nagy \cite{nagy,n}, Nagy and Tephnadze \cite{NT1,NT2,NT3,NT4}. Approximation properties of some general summability methods can be found in \cite{BPTW,BT2,BN,BNT,BPTW,MPT} (see also \cite{GT1,GT2}). Fridli, Manchanda and Siddiqi \cite{FMS} improved and
extended results of M\'oricz and Siddiqi \cite{Mor} to Martingale Hardy
spaces. The almost everywhere convergence of N\"orlund means of Vilenkin-Fourier series with
monotone coefficients of integrable functions was proved in \cite{PTW}.

In this paper we derive convergence of N\"orlund means of Vilenkin-Fourier
series with monotone coefficients of integrable functions in Lebesgue and
Vilinkin-Lebesgue points.

The paper is organized as following: In Section 3 we present and prove some auxiliary lemmas and in Section 4 we present and prove our main results. Moreover, in order not to disturb our discussions in these sections some preliminaries are
given in Section 2.

\section{Preliminaries}

Denote by $\mathbb{N}_{+}$ the set of the positive integers, $\mathbb{N}:=%
\mathbb{N}_{+}\cup \{0\}.$ Let $m:=(m_{0,}$ $m_{1},...)$ be a sequence of
the positive integers not less than 2. Denote by 
\begin{equation*}
Z_{m_{k}}:=\{0,1,...,m_{k}-1\}
\end{equation*}
the additive group of integers modulo $m_{k}$.

Define the group $G_{m}$ as the complete direct product of the groups $%
Z_{m_{i}}$ with the product of the discrete topologies of $Z_{m_{j}}`$s. In this paper we discuss bounded Vilenkin groups, i.e. the case
when $\sup_{n}m_{n}<\infty .$ The direct product $\mu $ of the measures 
\begin{equation*}
\mu _{k}\left( \{j\}\right) :=1/m_{k}\text{ \ \ \ }(j\in Z_{m_{k}})
\end{equation*}%
is the Haar measure on $G_{m_{\text{ }}}$with $\mu \left( G_{m}\right) =1.$

The elements of $G_{m}$ are represented by sequences 
\begin{equation*}
x:=\left( x_{0},x_{1},...,x_{j},...\right) ,\ \left( x_{j}\in
Z_{m_{j}}\right) .
\end{equation*}

It is easy to give a base for the neighborhood of $G_{m}:$
\begin{equation*}
I_{0}\left( x\right) :=G_{m},\text{ \ }I_{n}(x):=\{y\in G_{m}\mid
y_{0}=x_{0},...,y_{n-1}=x_{n-1}\},
\end{equation*}%
where $x\in G_{m},$ $n\in\mathbb{N}.$ Denote $I_{n}:=I_{n}\left( 0\right) $
for $n\in \mathbb{N}_{+},$ and $\overline{I_{n}}:=G_{m}$ $\backslash $ $I_{n}
$.

\bigskip If we define the so-called generalized number system based on $m$
in the following way $M_{0}:=1,\ M_{k+1}:=m_{k}M_{k}\,\,\,\ \ (k\in\mathbb{N}%
), $ then every $n\in\mathbb{N}$ can be uniquely expressed as $%
n=\sum_{j=0}^{\infty }n_{j}M_{j},$ where $n_{j}\in Z_{m_{j}}$ $(j\in\mathbb{N%
}_{+})$ and only a finite number of $n_{j}`$s differ from zero.

Next, we introduce on $G_{m}$ an orthonormal system which is called the
Vilenkin system. First, we define the complex-valued function $%
r_{k}\left( x\right) :G_{m}\rightarrow\mathbb{C},$ the generalized
Rademacher functions, by 
\begin{equation*}
r_{k}\left( x\right) :=\exp \left( 2\pi ix_{k}/m_{k}\right) ,\text{ }\left(
i^{2}=-1,x\in G_{m},\text{ \ }k\in\mathbb{N}\right) .
\end{equation*}

Now, define the Vilenkin system$\,\,\,\psi :=(\psi _{n}:n\in\mathbb{N})$ on $%
G_{m}$ as: 
\begin{equation*}
\psi _{n}(x):=\prod\limits_{k=0}^{\infty }r_{k}^{n_{k}}\left( x\right)
,\,\,\ \ \,\left( n\in\mathbb{N}\right) .
\end{equation*}

Specifically, we call this system the Walsh-Paley system when $m\equiv 2.$

The norms (or quasi-norms) of the spaces $L_{p}(G_{m})$ and $%
weak-L_{p}\left( G_{m}\right) $ $\left( 0<p<\infty \right) $ are
respectively defined by 
\begin{equation*}
\left\Vert f\right\Vert _{p}^{p}:=\int_{G_{m}}\left\vert f\right\vert
^{p}d\mu,\ \ \ \ \ \ \ \left\Vert f\right\Vert _{weak-L_{p}}^{p}:=\underset{%
	\lambda >0}{\sup }\lambda ^{p}\mu \left( f>\lambda \right) <+\infty .
\end{equation*}

The Vilenkin system is orthonormal and complete in $L_{2}\left( G_{m}\right) 
$ (see \cite{AVD}).

If $f\in L_{1}\left( G_{m}\right) $ we can define Fourier coefficients,
partial sums of the Fourier series, Dirichlet kernels in the usual manner: 
\begin{equation*}
\widehat{f}\left( n\right) :=\int_{G_{m}}f\overline{\psi }_{n}d\mu \ \ \ \ \
\, S_{n}f:=\sum_{k=0}^{n-1}\widehat{f}\left( k\right) \psi _{k},\text{ \ \ \ 
} D_{n}:=\sum_{k=0}^{n-1}\psi _{k\text{ }},\text{ \ \ }\left( n\in \mathbb{N}%
_{+}\right)
\end{equation*}
respectively. Recall that 
\begin{eqnarray}  \label{dn22}
&&\int_{G_{m}}D_n(x)dx=1, \\
&&D_{M_n-j}(x) =D_{M_n}(x)-\psi_{M_n-1}(x)\overline{D}_j(x), \ j<M_n. \label{dn23}
\end{eqnarray}

The convolution of two functions $f,g\in L_{1}(G_m)$ is defined by 
\begin{equation*}
\left( f\ast g\right) \left( x\right) :=\int_{G_m}f\left( x-t\right) g\left(
t\right) dt\text{ \ \ }\left( x\in G_m\right).
\end{equation*}%
It is easy to see that if $f\in L_{p}\left( G_m\right) ,$ $g\in  L_{1}\left(
G_m\right) $ and $1\leq p<\infty .$ Then $f\ast g\in  L_{p}\left( G_m\right) 
$ and 
\begin{equation}  \label{concond}
\left\Vert f\ast g\right\Vert_{p}\leq \left\Vert f\right\Vert_{p}\left\Vert
g\right\Vert _{1}.
\end{equation}

Let $\{q_{k}:k\geq 0\}$ be a sequence of non-negative numbers. Nörlund and $T$ means for a Fourier series of $f$ \ are respectively defined
by 
\begin{equation*}
t_{n}f=\frac{1}{Q_{n}}\overset{n}{\underset{k=1}{\sum }}q_{n-k}S_{k}f \ \ 
\text{and} \ \ T_nf:=\frac{1}{Q_n}\overset{n-1}{\underset{k=0}{\sum }}%
q_{k}S_kf, \ \ \text{where} \ \ Q_{n}:=\sum_{k=0}^{n-1}q_{k}.
\end{equation*}
It is obvious that 
\begin{equation*}
t_nf\left(x\right)=\underset{G_m}{\int}f\left(t\right)F_n\left(x-t\right)
d\mu\left(t\right) \ \ \ \ \text{and} \ \ \ \ T_nf\left(x\right)=\underset{%
	G_m}{\int}f\left(t\right)F^{-1}_n\left(x-t\right) d\mu\left(t\right)
\end{equation*}
where $F_n$ and $F^{-1}_n$ are kernels of N\"orlund and $T$ kernels
respectively
\begin{equation*}
F_n:=\frac{1}{Q_n}\overset{n}{\underset{k=1}{\sum }}q_{n-k}D_k \ \ \text{
	and } \ \ F^{-1}_n:=\frac{1}{Q_n}\overset{n}{\underset{k=1}{\sum }}q_{k}D_k.
\end{equation*}

We always assume that $\{q_k:k\geq 0\}$ be a sequence of non-negative
numbers and $q_0>0.$ Then N\"orlund means generated by $\{q_k:k\geq 0\}$ is
regular if and only if  
$\frac{q_{n-1}}{Q_n}\to 0, \  \text{as}  \ n\to \infty.$
Analogical regularity condition for  $T$  means is condition  $\lim_{n\rightarrow\infty}Q_n=\infty.$

If we invoke Abel transformation we get the following identities, which are
very important for the investigations of N\"orlund summability: 
\begin{eqnarray}  \label{2b}
Q_n&:=&\overset{n-1}{\underset{j=0}{\sum}}q_j=\overset{n}{\underset{j=1}{%
		\sum }}q_{n-j}\cdot 1 =\overset{n-1}{\underset{j=1}{\sum}}%
\left(q_{n-j}-q_{n-j-1}\right) j+q_0n,
\end{eqnarray}
\begin{equation}  \label{2bb}
F_n=\frac{1}{Q_n}\left(\overset{n-1}{\underset{j=1}{\sum}}\left(
q_{n-j}-q_{n-j-1}\right) jK_{j}+q_0nK_n\right)
\end{equation}
and
\begin{equation}  \label{2bbb}
t_n=\frac{1}{Q_n}\left(\overset{n-1}{\underset{j=1}{\sum}}\left(
q_{n-j}-q_{n-j-1}\right) j\sigma_{j}+q_0n\sigma_n\right).
\end{equation}

Let consider some class of N\"orlund means with monotone and bounded sequence $%
\{q_k:k\in \mathbb{N}\}$, such that $q:=\lim_{n\rightarrow\infty}q_n>c>0.$
It easy to check that 
$\frac{q_{n-1}}{Q_n}=O\left(\frac{1}{n}\right),\text{ \ as \ } n\rightarrow
\infty.$

The well-known example of N\"orlund summability is $\left(C,\alpha\right)$
-means (Ces\`aro means), where $0<\alpha<1$  which are defined by 
\begin{equation*}
\sigma_n^{\alpha}f:=\frac{1}{A_n^{\alpha}}\overset{n}{\underset{k=1}{ \sum}}%
A_{n-k}^{\alpha-1}S_kf, \ \text{where } \ A_0^{\alpha}:=0, \ A_n^{\alpha}:=%
\frac{\left(\alpha+1\right)...\left(\alpha+n\right)}{n!}.
\end{equation*}

Let $V_n^{\alpha}$ denote the N\"orlund mean, where 
$
\left\{q_0=1,\ q_k=k^{\alpha-1}:k\in \mathbb{N}_+\right\},
$
that is 
\begin{equation*}
V_n^{\alpha}f:=\frac{1}{Q_n}\overset{n-1}{\underset{k=0}{\sum }}%
(n-k)^{\alpha-1}S_kf,\qquad 0<\alpha<1.
\end{equation*}
It is easy to show that
$
\frac{q_{n-1}}{Q_n}= O\left(\frac{1}{n}\right)\rightarrow
0,\text{ \ as \ }n\rightarrow \infty.
$

The $n$-th Riesz`s logarithmic mean $R_{n}$ and  Nörlund logarithmic mean 
$L_{n}$ are defined by 
\begin{equation*}
R_{n}f:=\frac{1}{l_{n}}\sum_{k=1}^{n-1}\frac{S_{k}f}{k},\text{ \ \ \ }
L_{n}f:=\frac{1}{l_{n}}\sum_{k=1}^{n-1}\frac{S_{k}f}{n-k}, \ \ \text{where}
\ \ l_{n}:=\sum_{k=1}^{n-1}1/k.
\end{equation*}

Up to now we have considered $T$ mean in the case when the sequence $%
\{q_k:k\in\mathbb{N}\}$ is bounded but now we consider $T$ summabilities
with unbounded sequence $\{q_k:k\in\mathbb{N}\}$. 

Let us define the class of N\"orlund means with non-decreasing coefficients: 
\begin{equation*}
\beta_n^{\alpha}f:=\frac{1}{Q_n}\sum_{k=1}^{n}\log^{\alpha}\left( n-k-1\right)S_kf.
\end{equation*}%

It is obvious that
$
\frac{n}{2}\log^{\alpha}\left(n/2\right)\leq Q_n\leq n\log^{\alpha}n.
$
It follows that
\begin{eqnarray*} \label{node00}
	\frac{1}{Q_n}= O\left(\frac{1}{n}\right)\rightarrow 0,\text{ \ as \ }n\rightarrow \infty \ \ \ \ \ 
	\text{and}
	\ \ \ \ \ 
	\frac{q_{n-1}}{Q_n}= O\left(\frac{1}{n}\right)\rightarrow 0,\text{ \ as \ }n\rightarrow \infty.
\end{eqnarray*}

A point $x\in G_m$ is called a Lebesgue point of integrable function $f$ if
\begin{eqnarray*}
	\lim_{n\to 0}\frac{1}{\vert I_n(x)\vert}\int_{I_n(x)}\left\vert f(t)-f\left( x\right)\right\vert d\mu(t)=0.
\end{eqnarray*}

It is known that a.e. point $x\in G_m$ is a Lebesgue point of function $f$ and Fej\'{e}r means $\sigma_nf$ of trigonometric Fourier series of $f\in L_1(G_m)$ converge to $f$ at each Lebesgue point. It is also known that if $x\in G_m$ is point of continuity of function $f$ then it is Lebesgue point.

Let introduced the operator
\begin{eqnarray*}
	W_nf(x):= \sum_{s=0}^{n-1}M_s\sum_{r_s=1}^{m_s-1}\int_{I_n(x-r_se_s)}\left\vert f(t)-f\left( x\right)\right\vert d\mu(t)
\end{eqnarray*}

A point $x \in G_m $ is a Vilenkin-Lebesgue point of $f \in L_1 (G_m),$ if
$$
\lim_{n \rightarrow \infty} W_nf (x)=0.
$$

\section{Auxiliary Lemmas}

Next two lemmas can be found in \cite{AVD}:

\begin{lemma}\label{lemma7kn0}
	Let $n\in \mathbb{N}.$ Then for some constant $c,$ we have
	\begin{equation} \label{fn5}
	n\left\vert K_n\right\vert\leq c\sum_{l=\left\langle n\right\rangle}^ {\left\vert n\right\vert}M_l\left\vert K_{M_l}\right\vert\leq
	c\sum_{l=0}^{\left\vert n\right\vert }M_l\left\vert K_{M_l}\right\vert.
	\end{equation}
\end{lemma}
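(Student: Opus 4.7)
The plan is to exploit the digit structure of $n$ in the generalized Vilenkin number system. I would write $n = \sum_{j=\langle n\rangle}^{|n|} n_j M_j$ and introduce the partial tails $n^{(s)} := \sum_{j=s}^{|n|} n_j M_j$, so that $n^{(\langle n\rangle)} = n$ and $n^{(|n|+1)} = 0$. This yields the block decomposition
\begin{equation*}
n K_n = \sum_{k=1}^{n} D_k = \sum_{s=\langle n\rangle}^{|n|} \sum_{j=1}^{n_s M_s} D_{n^{(s+1)}+j}.
\end{equation*}

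The crucial ingredient is the following multiplicativity identity for Vilenkin characters: since $n^{(s+1)}$ is a multiple of $M_{s+1}$ and any $j$ with $1 \leq j \leq n_s M_s < M_{s+1}$ has its digits supported in positions strictly below $s+1$, the digit supports of $n^{(s+1)}$ and $j$ are disjoint, so $\psi_{n^{(s+1)}+j} = \psi_{n^{(s+1)}}\psi_j$. Consequently $D_{n^{(s+1)}+j} = D_{n^{(s+1)}} + \psi_{n^{(s+1)}} D_j$, and summation in $j$ yields
\begin{equation*}
\sum_{j=1}^{n_s M_s} D_{n^{(s+1)}+j} = n_s M_s \, D_{n^{(s+1)}} + \psi_{n^{(s+1)}} \, n_s M_s \, K_{n_s M_s}.
\end{equation*}

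The final step is to reduce the pieces $D_{n^{(s+1)}}$ and $K_{n_s M_s}$ to the elementary building blocks $M_l |K_{M_l}|$. For $D_{n^{(s+1)}}$ I would iterate the character decomposition and use the identity $D_{n_{s+1} M_{s+1}} = D_{M_{s+1}} \sum_{a=0}^{n_{s+1}-1} r_{s+1}^a$, which follows by grouping characters according to their $(s+1)$-th coordinate; combined with the telescoping relation $\sum_{k=M_l+1}^{M_{l+1}} D_k = M_{l+1} K_{M_{l+1}} - M_l K_{M_l}$ this produces pointwise estimates of the form $|D_{M_l}| \leq c(M_l |K_{M_l}| + M_{l-1}|K_{M_{l-1}}|)$. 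The factor $K_{n_s M_s}$ is handled analogously one level down. The triangle inequality, together with the boundedness hypothesis $\sup_k m_k < \infty$, lets me absorb the digit factors $n_s < m_s$ into an absolute constant, yielding the first inequality; the second is immediate by enlarging the range of summation. The main obstacle will be the combinatorial bookkeeping in this reduction step: one must verify that the constant $c$ remains independent of $n$ after iterating the argument across up to $|n|+1$ digit levels, which is precisely where the bounded-Vilenkin hypothesis is essential.
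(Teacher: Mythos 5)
The paper itself offers no proof of this lemma; it is quoted from the book \cite{AVD}, so your argument has to stand on its own. Your skeleton is in fact the standard route to this estimate and is sound: the partition of $\{1,\dots,n\}$ into digit blocks $(n^{(s+1)},n^{(s)}]$, the no-carry multiplicativity $\psi_{n^{(s+1)}+j}=\psi_{n^{(s+1)}}\psi_j$ for $1\le j\le n_sM_s<M_{s+1}$, and hence $nK_n=\sum_{s=\langle n\rangle}^{|n|}\bigl(n_sM_s\,D_{n^{(s+1)}}+\psi_{n^{(s+1)}}\,n_sM_sK_{n_sM_s}\bigr)$, are all correct, and the piece $n_sM_sK_{n_sM_s}$ does reduce cleanly to $cM_s\vert K_{M_s}\vert$ via $D_{aM_s}=D_{M_s}\sum_{q=0}^{a-1}r_s^q$ together with $n_s,m_s\le\sup_k m_k<\infty$. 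Note also that all your blocks live at levels $\ge\langle n\rangle$, which is exactly why the first inequality in \eqref{fn5} holds with the lower limit $\langle n\rangle$ rather than $0$.

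The gap is in your reduction of the $D$-terms. The estimate you state, $\vert D_{M_l}\vert\le c\bigl(M_l\vert K_{M_l}\vert+M_{l-1}\vert K_{M_{l-1}}\vert\bigr)$, carries a spurious factor $M_l$: if you actually feed it into the block sum you end up with $\sum_l M_l^2\vert K_{M_l}\vert$, not the right-hand side of \eqref{fn5}. What the argument needs, and what is true, is the constant-factor bound $D_{M_l}\le 2\vert K_{M_l}\vert$; this does not follow from the telescoping relation $\sum_{k=M_l+1}^{M_{l+1}}D_k=M_{l+1}K_{M_{l+1}}-M_lK_{M_l}$, which gives no pointwise lower bound on $\vert K_{M_l}\vert$ where $D_{M_l}$ is supported, but it follows immediately from the explicit values: $D_{M_l}(x)=M_l$ for $x\in I_l$ and $D_{M_l}(x)=0$ otherwise, while for $x\in I_l$ one has $D_k(x)=k$ for every $k\le M_l$, whence $K_{M_l}(x)=(M_l+1)/2$ on $I_l$. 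Second, your worry about iterating "across up to $|n|+1$ digit levels" is resolved not by $\sup_k m_k<\infty$ alone but by lacunarity: bounding $n_sM_s\vert D_{n^{(s+1)}}\vert\le M_{s+1}\sum_{l>s}n_l\,D_{M_l}\le cM_{s+1}\sum_{l>s}\vert K_{M_l}\vert$ and then exchanging the order of summation, one uses $M_{j+1}\ge 2M_j$ to get
\begin{equation*}
\sum_{s:\,s+1\le l}M_{s+1}\le 2M_l,\qquad\text{hence}\qquad \sum_{s}M_{s+1}\sum_{l>s}\vert K_{M_l}\vert\le 2\sum_{l}M_l\vert K_{M_l}\vert;
\end{equation*}
if instead you simply count the $\le |n|+1$ levels you lose a factor of order $\log n$ and the lemma fails to follow. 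With the corrected pointwise bound and this geometric summation, your decomposition does yield \eqref{fn5}, so the defect is repairable but, as written, the proof does not close.
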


\begin{lemma}\label{lemma7kn}
	Let $n\in \mathbb{N}.$ Then, for any $n,N\in \mathbb{N_+}$,
	\begin{eqnarray} \label{fn40}
	&&\int_{G_m} K_n (x)d\mu(x)=1,\\
	&& \label{fn4}
	\sup_{n\in\mathbb{N}}\int_{G_m}\left\vert K_n(x)\right\vert d\mu(x)<\infty,\\
	&& \label{fn400}
	\sup_{n\in\mathbb{N}}\int_{G_m \backslash I_N}\left\vert K_n(x)\right\vert d\mu (x)\rightarrow  0, \ \ \text{as} \ \ n\rightarrow  \infty.
	\end{eqnarray}
\end{lemma}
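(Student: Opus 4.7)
The plan is to derive the three conclusions (\ref{fn40}), (\ref{fn4}), (\ref{fn400}) from the representation of the Fej\'er kernel as the arithmetic mean of Dirichlet kernels, $K_n=\frac{1}{n}\sum_{k=1}^n D_k$, together with Lemma \ref{lemma7kn0} and the identities (\ref{dn22})--(\ref{dn23}).

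For (\ref{fn40}) I would simply integrate termwise and apply (\ref{dn22}) to obtain
\[
\int_{G_m} K_n\, d\mu = \frac{1}{n}\sum_{k=1}^n \int_{G_m} D_k\, d\mu = 1.
\]

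For (\ref{fn4}) I would integrate the pointwise estimate (\ref{fn5}) of Lemma \ref{lemma7kn0} and divide by $n$, obtaining
\[
\int_{G_m}|K_n|\, d\mu \leq \frac{c}{n}\sum_{l=0}^{|n|} M_l\int_{G_m}|K_{M_l}|\, d\mu.
\]
The task then reduces to the uniform bound $\int_{G_m}|K_{M_l}|\, d\mu \leq c$. I would obtain this by decomposing $G_m = I_l \cup \bigcup_{j<l}(I_j\setminus I_{j+1})$, using the explicit value $D_{M_l}=M_l \mathbf{1}_{I_l}$ together with identity (\ref{dn23}) to evaluate $M_l K_{M_l} = \sum_{k=1}^{M_l} D_k$ on each piece, and summing the resulting contributions. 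Once the uniform bound is in hand, boundedness of the Vilenkin group gives $\sum_{l=0}^{|n|} M_l \leq c M_{|n|+1}\leq c n$, and the desired estimate follows.

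For (\ref{fn400}) the same procedure, integrating (\ref{fn5}) now over $\overline{I_N}$, yields
\[
\int_{\overline{I_N}}|K_n|\, d\mu \leq \frac{c}{n}\sum_{l=0}^{|n|} M_l \int_{\overline{I_N}}|K_{M_l}|\, d\mu.
\]
I would split the sum at $l = N$: the contribution of the terms with $l<N$ is bounded by $cNM_N/n$ by reusing the uniform bound from the previous step, and so vanishes; for $l\geq N$ one must exploit that $K_{M_l}$ increasingly concentrates near the origin, producing a decay estimate for $\int_{\overline{I_N}}|K_{M_l}|\, d\mu$ that tightens as $N$ grows. I expect this last step — producing a quantitative decay for $\int_{\overline{I_N}}|K_{M_l}|\, d\mu$ strong enough to survive the factor $M_l$ inside the averaging sum — to be the principal technical obstacle, since the uniform bound from the proof of (\ref{fn4}) is by itself too crude. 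Resolving it requires working coset-by-coset on $I_j\setminus I_{j+1}$ with $j<N$, using (\ref{dn23}) to control the Dirichlet sums defining $M_l K_{M_l}$ there, and passing from this to the tail bound that closes the argument.
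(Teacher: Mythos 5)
First, a point of comparison: the paper contains no proof of this lemma at all — it is introduced with ``Next two lemmas can be found in \cite{AVD}'' and quoted from the literature. So your sketch is being measured against the standard argument rather than an in-paper one, and as a skeleton it follows that standard route correctly. Termwise integration with \eqref{dn22} gives \eqref{fn40}; integrating \eqref{fn5} from Lemma \ref{lemma7kn0} correctly reduces \eqref{fn4} and \eqref{fn400} to estimates on $\int\vert K_{M_l}\vert\,d\mu$; and your closing arithmetic is sound: $\sum_{l=0}^{\vert n\vert}M_l\leq cM_{\vert n\vert}\leq cn$ handles \eqref{fn4}, while for \eqref{fn400} (which, despite the spurious supremum in its statement, must be read as: for fixed $N$, $\int_{\overline{I_N}}\vert K_n\vert\,d\mu\to 0$ as $n\to\infty$ — this is exactly how the paper uses it in the proof of Corollary \ref{Corollary3nn}, where these integrals become the $\alpha_j$) the decay you need is $\int_{\overline{I_N}}\vert K_{M_l}\vert\,d\mu\leq cM_N/M_l$ for $l\geq N$. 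Contrary to your worry, this \emph{does} survive the factor $M_l$: each term of the averaging sum then contributes at most $cM_N$, there are $\vert n\vert+1=O(\log n)$ terms, and $cM_N(\vert n\vert+1)/n\to 0$ for fixed $N$.

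The genuine gap is that you stop precisely at the step carrying all the content: neither the uniform bound $\int_{G_m}\vert K_{M_l}\vert\,d\mu\leq c$ nor the tail decay is actually established, and identity \eqref{dn23} alone cannot produce them. Applied to $M_lK_{M_l}=\sum_{k=1}^{M_l}D_k=\sum_{j=0}^{M_l-1}D_{M_l-j}$, \eqref{dn23} yields only the reflection identity $M_lK_{M_l}=M_lD_{M_l}-\psi_{M_l-1}\overline{\left(M_lK_{M_l}-D_{M_l}\right)}$, which off $I_l$ (where $D_{M_l}=0$) reduces to $\vert K_{M_l}\vert=\vert K_{M_l}\vert$ and evaluates nothing. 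The missing input is the explicit Paley--G\'at type formula for $K_{M_l}$ (this is the computation in \cite{AVD}): $K_{M_l}=(M_l+1)/2$ on $I_l$; $\vert K_{M_l}(x)\vert\leq cM_t$ on the set of $x\in I_t\setminus I_{t+1}$, $t<l$, with $x-x_te_t\in I_l$ — a set of measure at most $(m_t-1)/M_l\leq c/M_l$ — and $K_{M_l}=0$ elsewhere; boundedness of the group enters through $\vert 1-r_t(x)\vert^{-1}\leq c$. Granted this formula, both missing estimates are one-line computations: $\int_{G_m}\vert K_{M_l}\vert\,d\mu\leq 1+c\sum_{t<l}M_t/M_l\leq c$, and for $l\geq N$ only the cosets with $t<N$ meet $\overline{I_N}$, giving $\int_{\overline{I_N}}\vert K_{M_l}\vert\,d\mu\leq c\sum_{t<N}M_t/M_l\leq cM_N/M_l$. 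So your reduction is correct and would close once this kernel formula is proved or cited, but as written the attempt is a valid outer shell around an IOU for the decisive estimate.
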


First we consider kernels of N\"orlund kernels with non-decreasing sequences:
\begin{lemma}\label{lemma0nn}
	Let $\{q_k:k\in\mathbb{N}\}$ be a sequence of non-decreasing numbers, satisfying the condition
	\begin{equation} \label{fn01}
	\frac{q_{n-1}}{Q_n}=O\left( \frac{1}{n}\right) ,\text{ \ \ as \ \ }
	n\rightarrow\infty.
	\end{equation}
	Then, for some constant $c,$ we have
	\begin{equation*}
	\left\vert F_n\right\vert\leq\frac{c}{n}\left\{\sum_{j=0}^{\left\vert n\right\vert }M_j\left\vert K_{M_j}\right\vert \right\}.
	\end{equation*}
\end{lemma}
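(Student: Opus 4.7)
The starting point is the Abel-transformation identity \eqref{2bb}:
$$F_n=\frac{1}{Q_n}\left(\sum_{j=1}^{n-1}\left(q_{n-j}-q_{n-j-1}\right) j K_{j}+q_0 n K_n\right).$$
Because $\{q_k\}$ is non-decreasing, each coefficient $q_{n-j}-q_{n-j-1}$ is non-negative (the larger index is $n-j$), so applying the triangle inequality pushes the absolute value inside without creating any cancellation loss:
$$\left\vert F_n\right\vert\leq\frac{1}{Q_n}\left(\sum_{j=1}^{n-1}\left(q_{n-j}-q_{n-j-1}\right) j\left\vert K_{j}\right\vert+q_0 n\left\vert K_n\right\vert\right).$$

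Next I would invoke Lemma \ref{lemma7kn0} to control the Fej\'er kernels. For each $1\leq j\leq n$ one has $\left\vert j\right\vert\leq\left\vert n\right\vert$, so
$$j\left\vert K_j\right\vert\leq c\sum_{l=0}^{\left\vert j\right\vert}M_l\left\vert K_{M_l}\right\vert\leq c\sum_{l=0}^{\left\vert n\right\vert}M_l\left\vert K_{M_l}\right\vert,$$
and the same bound holds for $n\left\vert K_n\right\vert$. Substituting, the common factor $\sum_{l=0}^{\left\vert n\right\vert}M_l\left\vert K_{M_l}\right\vert$ can be pulled out of both sums, giving
$$\left\vert F_n\right\vert\leq\frac{c}{Q_n}\left(\sum_{j=1}^{n-1}\left(q_{n-j}-q_{n-j-1}\right)+q_0\right)\sum_{l=0}^{\left\vert n\right\vert}M_l\left\vert K_{M_l}\right\vert.$$

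Finally the inner sum telescopes: reindexing $k=n-j$ we get $\sum_{j=1}^{n-1}(q_{n-j}-q_{n-j-1})=\sum_{k=1}^{n-1}(q_k-q_{k-1})=q_{n-1}-q_0$, so the bracket collapses to $q_{n-1}$. The regularity assumption \eqref{fn01} then yields $q_{n-1}/Q_n\leq c/n$, and we arrive at
$$\left\vert F_n\right\vert\leq\frac{c\, q_{n-1}}{Q_n}\sum_{l=0}^{\left\vert n\right\vert}M_l\left\vert K_{M_l}\right\vert\leq\frac{c}{n}\sum_{l=0}^{\left\vert n\right\vert}M_l\left\vert K_{M_l}\right\vert,$$
which is the desired inequality. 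There is no real obstacle here; the only subtle point is that monotonicity is what allows the triangle inequality to preserve the telescoping structure, whereas for a general sequence one would have to work with $\sum\left\vert q_{n-j}-q_{n-j-1}\right\vert$ and would lose the clean bound $q_{n-1}$.
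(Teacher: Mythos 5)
Your proposal is correct and follows essentially the same route as the paper: the Abel identity \eqref{2bb}, the observation that monotonicity makes the coefficient sum telescope to $q_{n-1}$ (so the triangle inequality is lossless), the kernel bound $j\left\vert K_j\right\vert\leq c\sum_{l=0}^{\left\vert n\right\vert}M_l\left\vert K_{M_l}\right\vert$ from Lemma \ref{lemma7kn0}, and finally condition \eqref{fn01}. The paper merely orders these steps slightly differently, bounding $\frac{1}{Q_n}\bigl(\sum_{j=1}^{n-1}\left\vert q_{n-j}-q_{n-j-1}\right\vert+q_0\bigr)\leq\frac{q_{n-1}}{Q_n}\leq\frac{c}{n}$ first and then factoring out the kernel sum, exactly as you do.
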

\begin{proof}
	Let the sequence $\{q_k:k\in \mathbb{N}\}$ be non-decreasing. Then, by using (\ref{fn01}), we get that
	\begin{eqnarray*}
		\frac{1}{Q_n}\left(\overset{n-1}{\underset{j=1}{\sum }}\left\vert
		q_{n-j}-q_{n-j-1}\right\vert+q_0\right) 
		\leq\frac{1}{Q_n}\left(\overset{n-1}{\underset{j=1}{\sum }}\left(
		q_{n-j}-q_{n-j-1}\right)+q_0\right) 
		\leq \frac{q_{n-1}}{Q_{n}}\leq \frac{c}{n}.
	\end{eqnarray*}
	
Hence, in view of \eqref{fn01} if we apply Lemma \ref{lemma7kn0} and use the equalities (\ref{2b}) and (\ref{2bb}) we obtain that
\begin{eqnarray*}
	\left\vert F_n\right\vert \leq  \left( \frac{1}{Q_n}\left( \overset{n-1}{\underset{j=1}{\sum }}\left\vert q_{n-j}-q_{n-j-1} \right\vert+q_0\right)\right)\sum_{i=0}^{\left\vert n\right\vert } M_i\left\vert K_{M_i}\right\vert \leq \frac{c}{n}\sum_{i=0}^{\left\vert n\right\vert }M_i\left\vert K_{M_i}\right\vert.
\end{eqnarray*}
The proof is complete.
\end{proof}

\begin{corollary}\label{Corollary3nn} Let $\{q_k:k\in \mathbb{N}\}$ be a sequence of non-decreasing numbers. Then, for any $n, N\in \mathbb{N_+},$  
\begin{eqnarray} \label{1.71}
&&\int_{G_m} F_n(x) d\mu (x)=1, \\
&&\sup_{n\in\mathbb{N}}\int_{G_m}\left\vert F_n(x)\right\vert d\mu(x)<\infty,\label{1.72} \\
&&\sup_{n\in\mathbb{N}}\int_{G_m \backslash I_N}\left\vert F_n(x)\right\vert d\mu (x)\rightarrow  0, \ \ \text{as} \ \ n\rightarrow  \infty. \label{1.73}
\end{eqnarray}
\end{corollary}
\begin{proof}
According to \eqref{dn22} we readily obtain \eqref{1.71}. By using \eqref{fn4} in Lemma \ref{lemma7kn}, combined with \eqref{2b} and \eqref{2bb} we get that
\begin{eqnarray*}
\int_{G_m}\left\vert F_n(x)\right\vert d\mu(x)&\leq&\frac{1}{Q_n}\overset{n-1}{\underset{j=1}{\sum}}\left(q_{n-j}-q_{n-j-1}\right)j\int_{G_m}\left\vert K_j\right\vert d\mu +\frac{q_0n}{Q_n} \int_{G_m}\vert K_n\vert d\mu\\
&\leq&\frac{c}{Q_n}\overset{n-1}{\underset{j=1}{\sum}}\left(q_{n-j}-q_{n-j-1}\right)j+\frac{cq_0n}{Q_n}<c<\infty.
\end{eqnarray*}
By using \eqref{fn400} in Lemma \ref{lemma7kn} and inequalities \eqref{2b} and \eqref{2bb} we can conclude that
\begin{eqnarray*}
&&\int_{G_m \backslash I_N}\left\vert F_n\right\vert d\mu 
\leq\frac{1}{Q_n}\overset{n-1}{\underset{j=0}{\sum}}\left(q_{n-j}-q_{n-j-1}\right)j\int_{G_m \backslash I_N}\left\vert K_j\right\vert d\mu  +\frac{q_0n}{Q_n}\int_{G_m \backslash I_N}\vert K_n\vert d\mu\\
&&\leq\frac{1}{Q_n}\overset{n-1}{\underset{j=0}{\sum}}\left(q_{n-j}-q_{n-j-1}\right)j\alpha_j+\frac{q_0n\alpha_n}{Q_n}:=I+II, \ \text{ where }  \  \alpha_n\to 0, \  \text{as} \  n\to\infty. \notag
\end{eqnarray*}
Since the sequence is non-decreasing, we can conclude that	$II\leq \alpha_n\to 0, \ \ \text{as} \ \ n\to\infty.$ 
	
On the other hand, since $\alpha_n$  converges to $0,$  we get that there exists an absolute constant $A,$ such that $\alpha_n\leq A$ for any $n\in \mathbb{N}$ and for any $\varepsilon>0$ there exists $N_0\in \mathbb{N},$ such that $\alpha_n< \varepsilon$ when $n>N_0.$ Hence, 
	\begin{eqnarray*}
		I&=&\frac{1}{Q_n}\overset{N_0}{\underset{j=1}{\sum}}\left(q_{n-j}-q_{n-j-1}\right)j\alpha_j
		+\frac{1}{Q_n}\overset{n-1}{\underset{j=N_0+1}{\sum}}\left(q_{n-j}-q_{n-j-1}\right)j\alpha_j :=I_1+I_2
	\end{eqnarray*}
	Since  	$\vert q_{n-j}-q_{n-j-1}\vert<2q_{n-1}$ and $\alpha_n<A,$ we obtain that
	\begin{eqnarray*}
	I_1=\frac{1}{Q_n}\overset{N_0}{\underset{j=1}{\sum}}\left(q_{n-j}-q_{n-j-1}\right)j\alpha_j \leq \frac{2AN_0q_{n-1}}{Q_n}\to 0, \ \ \ \text{as} \ \ \ n\to \infty
	\end{eqnarray*}
	and
	\begin{eqnarray*}
		I_2&=&\frac{1}{Q_n}\overset{n-1}{\underset{j=N_0+1}{\sum}}\left(q_{n-j}-q_{n-j-1}\right)j\alpha_j \\
		&\leq& \frac{\varepsilon}{Q_n}\overset{n-1}{\underset{j=N_0+1}{\sum}}\left(q_{n-j}-q_{n-j-1}\right)j
		\leq \frac{\varepsilon}{Q_n}\overset{n-1}{\underset{j=0}{\sum}}\left(q_{n-j}-q_{n-j-1}\right)j<\varepsilon.
	\end{eqnarray*}
We conclude that also $I_2\to 0,$ so  the proof is complete.
\end{proof}

\begin{lemma} \label{lemma0nn1}
Let $\{q_k:k\in\mathbb{N}\}$ be a sequence of non-increasing numbers satisfying the condition
\begin{equation} \label{fn0}
\frac{1}{Q_n}=O\left(\frac{1}{n}\right),\text{\ \ as \ \ }n\rightarrow \infty.
\end{equation}
Then, for some constant $c,$ we have
	\begin{equation*}
	\left\vert F_n\right\vert \leq\frac{c}{n}\left\{\sum_{j=0}^{\left\vert n\right\vert }M_j\left\vert K_{M_j}\right\vert\right\}.
	\end{equation*}
\end{lemma}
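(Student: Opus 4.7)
The plan is to mirror the argument used for Lemma \ref{lemma0nn}: represent $F_n$ via the Abel identity (\ref{2bb}), use the pointwise estimate of Lemma \ref{lemma7kn0} to factor out the sum $\sum_{l=0}^{\left\vert n\right\vert }M_l\vert K_{M_l}\vert$, and then reduce the problem to showing that
\begin{equation*}
\frac{1}{Q_n}\left(\overset{n-1}{\underset{j=1}{\sum}}\left\vert q_{n-j}-q_{n-j-1}\right\vert + q_0\right) \leq \frac{c}{n}.
\end{equation*}
For the factorization step I would apply $j\vert K_j\vert \leq c\sum_{l=0}^{\left\vert n\right\vert }M_l\vert K_{M_l}\vert$, which is valid for every $j\leq n$ since $j\leq n$ implies $\left\vert j\right\vert \leq \left\vert n\right\vert$, together with the analogous bound for $n\vert K_n\vert$; both come directly from Lemma \ref{lemma7kn0}.

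The one adaptation required is the handling of the weighted differences. Since $\{q_k\}$ is now non-increasing, $\left\vert q_{n-j}-q_{n-j-1}\right\vert = q_{n-j-1}-q_{n-j}$ and the sum telescopes,
\begin{equation*}
\overset{n-1}{\underset{j=1}{\sum}}\left\vert q_{n-j}-q_{n-j-1}\right\vert = q_0 - q_{n-1} \leq q_0,
\end{equation*}
so the whole parenthesis is bounded by $2q_0$, which is a constant depending only on the fixed sequence. Hypothesis (\ref{fn0}) then yields $2q_0/Q_n \leq c/n$, which is exactly the estimate needed. Piecing this together with the kernel factorization above produces the claimed pointwise bound on $\vert F_n\vert$.

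The structural contrast with Lemma \ref{lemma0nn} is worth recording, since it explains why the hypothesis must be strengthened from (\ref{fn01}) to (\ref{fn0}). In the non-decreasing case the telescoping generates $q_{n-1}-q_0$, and the requisite $O(1/n)$ decay is supplied by $q_{n-1}/Q_n = O(1/n)$; here the telescoping leaves the constant $q_0$, so the decay must come entirely from $Q_n$ itself, and the only way to obtain it is through $1/Q_n = O(1/n)$. I do not foresee any genuine obstacle beyond making this substitution: once the telescoping identity is observed, the proof reduces to the same bookkeeping already carried out in Lemma \ref{lemma0nn}.
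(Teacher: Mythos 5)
Your proposal is correct and follows essentially the same route as the paper's own proof: bound $\vert F_n\vert$ via the Abel identity (\ref{2bb}) and Lemma \ref{lemma7kn0}, telescope the non-increasing differences to get $\sum_{j=1}^{n-1}\vert q_{n-j}-q_{n-j-1}\vert + q_0 \leq 2q_0 - q_{n-1} \leq 2q_0$, and conclude with hypothesis (\ref{fn0}). Your closing remark contrasting where the $O(1/n)$ decay comes from in the two lemmas is accurate and matches the paper's implicit reasoning, so no changes are needed.
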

\begin{proof}
	Let the sequence $\{q_k:k\in \mathbb{N}\}$ be non-increasing and satisfying
	condition (\ref{fn0}). Then
	\begin{eqnarray*}
		&&\frac{1}{Q_n}\left( \overset{n-1}{\underset{j=1}{\sum}}\left\vert
		q_{n-j}-q_{n-j-1}\right\vert+q_0\right) \\
		&\leq& \frac{1}{Q_n}\left(\overset{n-1}{\underset{j=1}{\sum}}-
		\left(q_{n-j}-q_{n-j-1}\right)+q_0\right) \\
		&\leq&\frac{2q_0-q_{n-1}}{Q_n}\leq\frac{2q_0}{Q_n}\leq\frac{c}{n}.
	\end{eqnarray*}
	
	If we apply Lemma \ref{lemma7kn0} and invoke equalities (\ref{2b}) and (\ref{2bb}) we immediately get that
	\begin{eqnarray*}
		\left\vert F_n\right\vert \leq\left(\frac{1}{Q_n} \left(\overset{n-1}{\underset{j=1}{\sum }}\left\vert q_{n-j}-q_{n-j-1}\right\vert +q_0\right)\right) \sum_{i=0}^{\left\vert n\right\vert }M_{i}\left\vert K_{M_i}\right\vert
		\leq \frac{c}{n}\sum_{i=0}^{\left\vert n\right\vert }M_i\left\vert
		K_{M_i}\right\vert.
	\end{eqnarray*}
	
	The proof is complete.	
\end{proof}

\begin{corollary}
	\label{corollary3n9} Let $\{q_k:k\in \mathbb{N}\}$ be a sequence of non-increasing numbers satisfying the condition \eqref{fn0}. Then, for any $n,N\in \mathbb{N_+}$,
	\begin{eqnarray} \label{1.71inc}
	&&\int_{G_m} F_n(x) d\mu (x)=1, \\
	&&\sup_{n\in\mathbb{N}}\int_{G_m}\left\vert F_n(x)\right\vert d\mu(x)<\infty,\label{1.72inc} \\
	&&\sup_{n\in\mathbb{N}}\int_{G_m \backslash I_N}\left\vert F_n(x)\right\vert d\mu (x)\rightarrow  0, \ \ \text{as} \ \ n\rightarrow  \infty, \ \ \text{for any} \ \ N\in \mathbb{N_+}.\label{1.73inc}
	\end{eqnarray}
\end{corollary}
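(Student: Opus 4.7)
The plan is to follow the template of Corollary \ref{Corollary3nn} essentially line by line, altering only the places where the direction of monotonicity and the form of the regularity hypothesis enter. The identity \eqref{1.71inc} is immediate from \eqref{dn22}, since by definition $F_n$ is a convex combination of Dirichlet kernels, each of which integrates to $1$.

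For the uniform bound \eqref{1.72inc}, I would apply Minkowski's inequality to the Abel decomposition \eqref{2bb}, use the bound $\int_{G_m}|K_j|d\mu \leq c$ from \eqref{fn4}, and exploit the non-increasing hypothesis to rewrite $|q_{n-j}-q_{n-j-1}|$ as $q_{n-j-1}-q_{n-j}$. The Abel identity \eqref{2b} then collapses the resulting weighted sum to $q_0 n - Q_n$, so the full estimate takes the form $(2q_0 n - Q_n)/Q_n$ up to a constant, which is bounded because \eqref{fn0} is equivalent to $n/Q_n = O(1)$. This is the step that departs from the non-decreasing case, where the Abel sum instead evaluated to $Q_n - q_0 n$.

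For the tail bound \eqref{1.73inc}, fix $N$ and set $\alpha_j := \int_{G_m\setminus I_N}|K_j|\,d\mu$, which is bounded by some absolute constant $A$ and tends to $0$ as $j \to \infty$, by \eqref{fn400}. The same decomposition yields
\[
\int_{G_m\setminus I_N}|F_n|\,d\mu \leq \frac{1}{Q_n}\sum_{j=1}^{n-1}(q_{n-j-1}-q_{n-j})\,j\,\alpha_j + \frac{q_0 n\,\alpha_n}{Q_n}.
\]
The boundary term is $O(\alpha_n) \to 0$ by regularity. Given $\varepsilon > 0$, I would pick $N_0$ with $\alpha_j < \varepsilon$ for $j > N_0$ and split the sum: the tail $j > N_0$ is dominated by $\varepsilon$ times the estimate used in \eqref{1.72inc}, hence by $c\varepsilon$, while the head of $N_0$ terms is controlled using the crude bound $q_{n-j-1} - q_{n-j} \leq q_0$ together with the fact that $Q_n \to \infty$, which is itself a consequence of \eqref{fn0}.

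The one genuine subtlety is the head-of-sum estimate. In the non-decreasing case one exploited $|q_{n-j} - q_{n-j-1}| \leq 2q_{n-1}$ combined with the decay $q_{n-1}/Q_n = O(1/n)$; here the natural substitute $q_{n-j-1} - q_{n-j} \leq q_0$ is merely a constant, so the vanishing of the finite head rests entirely on $1/Q_n \to 0$ rather than on any monotone decay of the increments. Everything else is a formal transcription of the previous argument.
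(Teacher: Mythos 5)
Your proof is correct and takes essentially the same route as the paper: the paper's own proof of Corollary \ref{corollary3n9} simply declares the argument analogous to Corollary \ref{Corollary3nn} and omits all details, and your write-up supplies exactly those details with the correct sign adjustments. In particular, you rightly isolate the two points where the non-increasing case genuinely differs --- the Abel sum $\sum_{j}(q_{n-j-1}-q_{n-j})j$ now collapses to $q_0 n - Q_n$, so boundedness of $n/Q_n$ (i.e.\ condition \eqref{fn0}) becomes indispensable, and the finite head of the sum vanishes because $Q_n\to\infty$ rather than through any decay of $q_{n-1}/Q_n$ --- which is precisely how the paper treats the corresponding terms $III_1$ and $III_2$ in part b) of Theorem \ref{Corollary3nnconv}.
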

\begin{proof}
	If we compare the estimation of $F_n$ in Corollary \ref{lemma0nn1} with the estimation of $F_n$ in Lemma \ref{lemma0nn}  we find that they are quite the same. Hence, the proof is analogous to the proof of Corollary \ref{Corollary3nn}, so, we leave out the details.
\end{proof}

Finally, we study special subsequences of kernels of N\"orlund and $T$ means:
\begin{lemma}\label{lemma0nnT121}
Let $n\in \mathbb{N}$. Then
\begin{eqnarray*}  F_{M_n}(x)=D_{M_n}(x)-\psi_{M_n-1}(x)\overline{F^{-1}}_{M_n}(x)
\end{eqnarray*}
\end{lemma}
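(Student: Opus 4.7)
The plan is to prove the identity by direct manipulation of the defining sum for $F_{M_n}$, using the Dirichlet kernel identity \eqref{dn23} as the crucial ingredient.

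First I would write out the kernel from the definition
$$F_{M_n}(x)=\frac{1}{Q_{M_n}}\sum_{k=1}^{M_n}q_{M_n-k}D_k(x)$$
and apply the substitution $j=M_n-k$ (so $k=1$ corresponds to $j=M_n-1$ and $k=M_n$ to $j=0$) to reindex this as
$$F_{M_n}(x)=\frac{1}{Q_{M_n}}\sum_{j=0}^{M_n-1}q_{j}D_{M_n-j}(x).$$
This rewriting is the whole point: it puts $F_{M_n}$ in a form where the coefficients $q_j$ line up exactly with those appearing in the $T$-kernel $F^{-1}_{M_n}$.

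Next I would invoke identity \eqref{dn23}, namely $D_{M_n-j}(x)=D_{M_n}(x)-\psi_{M_n-1}(x)\overline{D_j(x)}$ valid for $j<M_n$. (The boundary case $j=0$ causes no issue because $D_0\equiv 0$, so the formula holds trivially there too.) Substituting and splitting the sum gives
$$F_{M_n}(x)=\frac{D_{M_n}(x)}{Q_{M_n}}\sum_{j=0}^{M_n-1}q_j \;-\;\psi_{M_n-1}(x)\,\frac{1}{Q_{M_n}}\sum_{j=0}^{M_n-1}q_{j}\,\overline{D_j(x)}.$$
By the very definition $Q_{M_n}=\sum_{j=0}^{M_n-1}q_j$, the first term collapses to $D_{M_n}(x)$. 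The second sum, once one pulls the complex conjugation outside (since $q_j$ is real), is precisely $\overline{F^{-1}_{M_n}(x)}$, giving the desired identity.

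The only mild subtlety — and really the only place one could slip — is the index bookkeeping: one must be careful that the reindexed sum runs from $j=0$ to $j=M_n-1$ so that the coefficients match $Q_{M_n}$ exactly, and one must reconcile this with the stated range of summation in the definition of $F^{-1}_n$ (using $D_0\equiv 0$ to pad or trim endpoints harmlessly, consistent with the $T_n$ mean it represents). Other than this bookkeeping, the proof is a two-line consequence of \eqref{dn23} and the definition of $Q_{M_n}$.
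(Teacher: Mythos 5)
Your proof is correct and follows essentially the same route as the paper's: reindex the sum via $k\mapsto M_n-k$, apply \eqref{dn23}, split, and identify $Q_{M_n}=\sum_{j=0}^{M_n-1}q_j$ and the conjugated sum with $\overline{F^{-1}_{M_n}}$. In fact you are slightly more careful than the paper, which silently passes over the $j=0$ endpoint and the off-by-one mismatch with the displayed definition of $F^{-1}_n$ that you correctly resolve via $D_0\equiv 0$.
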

\begin{proof} By using \eqref{dn23} we get that
\begin{eqnarray*}
F_{M_n}(x)&=&\frac{1}{Q_{M_n}}\overset{M_n}{\underset{k=1}{\sum}}q_{M_n-k}D_k(x)=\frac{1}{Q_{M_n}}\overset{M_n-1}{\underset{k=0}{\sum }}q_kD_{M_n-k}(x)\\
&=&\frac{1}{Q_{M_n}}\overset{M_n-1}{\underset{k=0}{\sum }}q_k\left(D_{M_n}(x)-\psi_{M_n-1}(x)\overline{D}_j(x)\right)=D_{M_n}(x)-\psi_{M_n-1}(x)\overline{F^{-1}}_{M_n}(x).
\end{eqnarray*}
The proof is complete.
\end{proof}

\begin{lemma}\label{lemma0nnT11}
	Let $\{q_k:k\in\mathbb{N}\}$ be a sequence of non-increasing numbers. Then, for any $n,N\in \mathbb{N_+}$,
\begin{eqnarray} \label{1.71alphaT1}
&&\int_{G_m} F^{-1}_n(x) d\mu (x)=1, \\
&&\sup_{n\in\mathbb{N}}\int_{G_m}\left\vert F^{-1}_n(x)\right\vert d\mu(x)<\infty,\label{1.72alphaT1} \\
&&\sup_{n\in\mathbb{N}}\int_{G_m \backslash I_N}\left\vert F^{-1}_n(x)\right\vert d\mu (x)\rightarrow  0, \ \ \text{as} \ \ n\rightarrow  \infty, \label{1.73alphaT1}
\end{eqnarray}
\end{lemma}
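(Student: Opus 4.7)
The plan is to mirror the argument used for N\"orlund kernels in Corollary \ref{Corollary3nn}, reducing $F^{-1}_n$ to a non-negative combination of Fej\'er kernels $K_j$. Applying Abel summation to $\sum_{k=1}^n q_k D_k$ and using $\sum_{j=1}^k D_j = k K_k$, one obtains the $T$-analogue of \eqref{2bb}:
$$F^{-1}_n = \frac{1}{Q_n}\left(\sum_{j=1}^{n-1}(q_j - q_{j+1})\, j K_j + q_n n K_n\right).$$
The same summation by parts applied to the constant sequence $1$ yields the identity $\sum_{j=1}^{n-1}(q_j - q_{j+1}) j + q_n n = \sum_{k=1}^n q_k$. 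Because $\{q_k\}$ is non-increasing, all the coefficients $q_j - q_{j+1}$ are non-negative, so the triangle inequality loses nothing when we pass to $|F^{-1}_n|$.

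Property \eqref{1.71alphaT1} then follows by termwise integration, using \eqref{fn40} and the Abel identity above. For \eqref{1.72alphaT1}, the triangle inequality and the uniform $L_1$ bound \eqref{fn4} on $K_j$ give
$$\int_{G_m}\left|F^{-1}_n\right|\, d\mu \leq \frac{c}{Q_n}\left(\sum_{j=1}^{n-1}(q_j - q_{j+1}) j + q_n n\right) = \frac{c}{Q_n}\sum_{k=1}^n q_k \leq c.$$
For \eqref{1.73alphaT1}, set $\alpha_j := \int_{G_m\setminus I_N}|K_j|\, d\mu$; by \eqref{fn400} one has $\alpha_j \to 0$, and by \eqref{fn4} an absolute bound $\alpha_j \leq A$. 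Fix $\varepsilon > 0$, choose $N_0$ so that $\alpha_j < \varepsilon$ for $j > N_0$, and split
$$\int_{G_m\setminus I_N}\left|F^{-1}_n\right|\, d\mu \leq \frac{1}{Q_n}\sum_{j=1}^{N_0}(q_j - q_{j+1}) j \alpha_j + \frac{1}{Q_n}\sum_{j=N_0+1}^{n-1}(q_j - q_{j+1}) j \alpha_j + \frac{q_n n \alpha_n}{Q_n}.$$
The middle term is at most $\varepsilon \sum_{k=1}^n q_k/Q_n \leq c\varepsilon$; the first is $O(A N_0 q_0/Q_n) \to 0$ thanks to the $T$-regularity assumption $Q_n\to\infty$; the last is dominated by $\alpha_n \to 0$, since the non-increasing hypothesis gives $Q_n \geq n q_n$ and hence $q_n n/Q_n \leq 1$.

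The only delicate point is a bookkeeping one: the ``head'' sum $\frac{1}{Q_n}\sum_{j\leq N_0}(q_j - q_{j+1}) j \alpha_j$ cannot be killed through smallness of $\alpha_j$, so one must spend the normalizing factor $1/Q_n$ instead. This is exactly where the standing $T$-regularity $Q_n\to\infty$ is used; apart from this, the argument is driven by the monotonicity of $\{q_k\}$ and the Fej\'er-kernel facts collected in Lemma \ref{lemma7kn}, and runs strictly parallel to the N\"orlund case treated in Corollary \ref{Corollary3nn}, so at the end it suffices to refer the reader there for the remaining details.
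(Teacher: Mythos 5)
Your proof is correct and takes exactly the route the paper intends: the paper's own ``proof'' of this lemma is a one-line referral to the arguments of Corollaries \ref{Corollary3nn} and \ref{corollary3n9}, and your Abel transformation to Fej\'er kernels, the identity $\sum_{j=1}^{n-1}(q_j-q_{j+1})j+q_n n=\sum_{k=1}^{n}q_k$, and the $N_0$-splitting against the bounds of Lemma \ref{lemma7kn} are precisely those steps, with the (correct) observation that for non-increasing $q_k$ the Abel coefficients $q_j-q_{j+1}$ are non-negative, so the case parallels the non-decreasing N\"orlund situation. You are also right to flag that the head sum genuinely needs the standing $T$-regularity $Q_n\to\infty$ (without it \eqref{1.73alphaT1} fails, e.g.\ for $q_k=2^{-k}$), a hypothesis the lemma's statement leaves implicit and the paper's referral glosses over.
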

\begin{proof}
If we follow analogous steps of Corollaries \ref{Corollary3nn} and \ref{corollary3n9} we immediately get proof. So, we leave out the details.
\end{proof}

\begin{corollary} \label{corollary3n}
	Let $\{q_k:k\in \mathbb{N}\}$ be a sequence of non-increasing numbers. Then, for any $n,N\in \mathbb{N_+}$,
	\begin{eqnarray} \label{1.71alpha}
	&&\int_{G_m} F_{M_n}(x) d\mu (x)=1, \\
	&&\sup_{n\in\mathbb{N}}\int_{G_m}\left\vert F_{M_n}(x)\right\vert d\mu(x)\leq c<\infty,\label{1.72alpha} \\
	&&\sup_{n\in\mathbb{N}}\int_{G_m \backslash I_N}\left\vert F_{M_n}(x)\right\vert d\mu (x)\rightarrow  0, \ \ \text{as} \ \ n\rightarrow  \infty, \label{1.73alpha}
	\end{eqnarray}
\end{corollary}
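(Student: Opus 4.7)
The plan is to use the pointwise identity
\begin{equation*}
F_{M_n}(x) = D_{M_n}(x) - \psi_{M_n-1}(x)\overline{F^{-1}}_{M_n}(x)
\end{equation*}
proved in Lemma \ref{lemma0nnT121} as the single bridge to reduce every statement about $F_{M_n}$ to an already-known statement about $D_{M_n}$ and about the $T$-kernel $F^{-1}_{M_n}$. The hypothesis (non-increasing $q_k$) in the corollary coincides with the hypothesis of Lemma \ref{lemma0nnT11}, so the three inequalities \eqref{1.71alphaT1}--\eqref{1.73alphaT1} for $F^{-1}$ are available to me; the identity is an unconditional pointwise equality, so I may apply it freely. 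The key accounting remark is that $|\psi_{M_n-1}(x)|\equiv 1$, so the character disappears from every absolute-value estimate.

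For \eqref{1.71alpha} I do not even need the decomposition: integrating the defining expansion $F_{M_n}=Q_{M_n}^{-1}\sum_{k=1}^{M_n}q_{M_n-k}D_k$ term-by-term and using $\int_{G_m}D_k\,d\mu=1$ from \eqref{dn22} gives $\int_{G_m}F_{M_n}\,d\mu = Q_{M_n}^{-1}\sum_{k=1}^{M_n}q_{M_n-k}=1$. For \eqref{1.72alpha} I take absolute values in the identity and apply the triangle inequality to obtain
\begin{equation*}
\int_{G_m}\left\vert F_{M_n}\right\vert d\mu \leq \int_{G_m}\left\vert D_{M_n}\right\vert d\mu + \int_{G_m}\left\vert F^{-1}_{M_n}\right\vert d\mu;
\end{equation*}
the first integral equals $1$ since $D_{M_n}=M_n\mathbf{1}_{I_n}$, and the second is bounded uniformly in $n$ by \eqref{1.72alphaT1}. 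For \eqref{1.73alpha} I use the same splitting over $G_m\setminus I_N$: once $n\geq N$, the set $\mathrm{supp}\,D_{M_n}=I_n$ is contained in $I_N$, so the $D_{M_n}$-integral is identically zero, while the $F^{-1}_{M_n}$-integral tends to $0$ by \eqref{1.73alphaT1}.

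There is no real obstacle beyond bookkeeping, because Lemmas \ref{lemma0nnT121} and \ref{lemma0nnT11} together do all the structural work. The one place to be slightly careful is the handling of the complex conjugate $\overline{F^{-1}}_{M_n}$ in the identity: this does not interfere with any of the three estimates since $|\,\overline{g}\,|=|g|$ pointwise and the Haar measure does not distinguish $g$ from $\overline{g}$, while the multiplicative character $\psi_{M_n-1}$ likewise has modulus one. Thus the proof is, as the authors indicate for Lemma \ref{lemma0nnT11}, routine once the decomposition of Lemma \ref{lemma0nnT121} is in hand.
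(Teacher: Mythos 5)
Your proof is correct and takes essentially the same route as the paper: the paper's own proof of Corollary \ref{corollary3n} is a one-line reduction to Lemmas \ref{lemma0nnT121} and \ref{lemma0nnT11}, which is exactly the decomposition $F_{M_n}=D_{M_n}-\psi_{M_n-1}\overline{F^{-1}}_{M_n}$ plus the $T$-kernel estimates that you use. The details you supply explicitly (term-by-term integration via \eqref{dn22}, $\vert\psi_{M_n-1}\vert\equiv 1$, and $D_{M_n}=M_n\mathbf{1}_{I_n}$ so that the Dirichlet part vanishes off $I_N$ for $n\geq N$) are precisely the bookkeeping the authors leave implicit.
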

\begin{proof}
The proof immediately follows Lemmas \ref{lemma0nnT121} and \ref{lemma0nnT11}.
\end{proof}

\section{Proofs of the Theorems}

\begin{theorem}\label{Corollary3nnconv} a) Let $p\geq 1$ and $\{q_k:k\in \mathbb{N}\}$ be a sequence of non-decreasing numbers. Then, 	for all  $f\in L_p(G_m)$,
	$$\Vert t_n f-f\Vert_p \to 0 \ \ \text{as}\ \ n\to \infty.$$
	
	Let function $f\in L_1(G_m)$ is continuous at a point $x.$ Then
	${{t}_{n}}f(x)\to f(x), \ \ \ \text{as}\ \  \ n\to\infty.$
	Moreover, for all Vilenkin-Lebesgue points of $f\in L_p(G_m)$,
	\begin{equation*}
	\underset{n\rightarrow \infty }{\lim }t_nf(x)=f(x).
	\end{equation*}

b)	Let $p\geq 1$ and $\{q_k:k\in \mathbb{N}\}$ be a sequence of non-increasing numbers satisfying the condition \eqref{fn0}. Then, for all  $f\in L_p(G_m)$, 
	$$\Vert t_n f-f\Vert_p \to 0 \ \ \text{as}\ \ n\to \infty.$$
		
Let function $f\in L_1(G_m)$ is continuous at a point $x.$ Then
${{t}_{n}}f(x)\to f(x), \ \ \ \text{as}\ \  \ n\to\infty.$ Moreover, for all Vilenkin-Lebesgue points of $f\in L_p(G_m)$,
	\begin{equation*}
	\underset{n\rightarrow \infty }{\lim }t_nf(x)=f(x).
	\end{equation*}
\end{theorem}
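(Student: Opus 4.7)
My plan is as follows. Because Corollaries~\ref{Corollary3nn} and~\ref{corollary3n9} supply identical properties for $F_n$---namely integral equal to $1$, a uniform $L^1$ bound, and vanishing $L^1$ mass outside any fixed $I_N$---parts~(a) and~(b) of the theorem can be proved in parallel; in what follows, each mention of \eqref{1.71}--\eqref{1.73} should be read as also referring to its non-increasing analogue \eqref{1.71inc}--\eqref{1.73inc}. The starting point in both cases is the convolution identity $t_nf = f\ast F_n$ combined with \eqref{1.71}, which yields the basic representation
\begin{equation*}
t_n f(x) - f(x) = \int_{G_m}\bigl(f(x-t)-f(x)\bigr)F_n(t)\,d\mu(t).
\end{equation*}

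For the $L_p$ norm statement I apply Minkowski's integral inequality together with the translation invariance of $\|\cdot\|_p$ to obtain
\begin{equation*}
\|t_nf-f\|_p \leq \int_{G_m}\|f(\cdot-t)-f\|_p\,|F_n(t)|\,d\mu(t),
\end{equation*}
and split the right-hand side at a fixed $I_N$. On $I_N$ the translation continuity of $L_p$ on $G_m$ makes $\|f(\cdot-t)-f\|_p$ uniformly smaller than any prescribed $\varepsilon$ once $N$ is chosen large, while \eqref{1.72} controls the $L^1$ mass of $F_n$ there; on $G_m\setminus I_N$ I dominate by $2\|f\|_p$ and absorb the tail by sending $n\to\infty$ with $N$ fixed, using \eqref{1.73}. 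The continuity-at-$x$ statement is treated by the same splitting, with the modulus of continuity of $f$ at $x$ replacing the $L_p$ modulus on $I_N$; the tail outside $I_N$, which cannot be handled by an $L_\infty$ bound on $f$, is controlled by the sharper kernel estimate described below.

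For pointwise convergence at a Vilenkin--Lebesgue point, the three approximate-identity properties alone do not suffice. I substitute instead the sharper pointwise bound from Lemmas~\ref{lemma0nn} and~\ref{lemma0nn1},
\begin{equation*}
|F_n(t)|\leq \frac{c}{n}\sum_{j=0}^{|n|}M_j\,|K_{M_j}(t)|,
\end{equation*}
into the integral representation to obtain
\begin{equation*}
|t_nf(x)-f(x)|\leq \frac{c}{n}\sum_{j=0}^{|n|}M_j\int_{G_m}|K_{M_j}(x-t)|\,|f(t)-f(x)|\,d\mu(t).
\end{equation*}
The classical coset-support description of $M_j|K_{M_j}|$ on bounded Vilenkin groups bounds it, up to an absolute constant, by a weighted sum of coset indicators $M_s\mathbf{1}_{I_j(-r_s e_s)}$ with $0\leq s<j$ and $1\leq r_s<m_s$. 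After translating by $x$, integrating against $|f(t)-f(x)|$ yields precisely a constant multiple of $W_j f(x)$, so
\begin{equation*}
|t_nf(x)-f(x)|\leq \frac{c}{n}\sum_{j=0}^{|n|}W_j f(x).
\end{equation*}
Since $W_j f(x)\to 0$ by the Vilenkin--Lebesgue hypothesis (hence $\{W_jf(x)\}$ is bounded) and since $|n|/n\to 0$, the right-hand side tends to zero as $n\to\infty$.

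The principal obstacle is justifying cleanly the coset-indicator majorization of $M_j|K_{M_j}|$ that converts the inner Fej\'er-kernel integral into exactly $W_j f(x)$; this is the step where the definition of Vilenkin--Lebesgue point is genuinely used, and where I must be careful with index compatibility (the $A$ in $W_A$ versus the $j$ in the kernel bound) so that $W_Af(x)\to 0$ can be exploited at the right moment.
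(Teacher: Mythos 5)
Your treatment of the $L_p$ statement and of convergence at points of continuity is correct and is essentially the paper's argument: both rest only on the approximate-identity properties \eqref{1.71}--\eqref{1.73} (and \eqref{1.71inc}--\eqref{1.73inc}) of $F_n$. At the Vilenkin--Lebesgue points, however, you take a genuinely different route from the paper, and it is there that your proof has a real gap. The paper never touches the kernel estimates of Lemmas \ref{lemma0nn} and \ref{lemma0nn1} at this stage: it uses the Abel-transform identity \eqref{2bbb} to write $t_nf(x)-f(x)$ as a nonnegative weighted mean, with weights summing to $Q_n$ by \eqref{2b}, of the quantities $\sigma_jf(x)-f(x)$, then invokes the already known fact (Goginava--Gogoladze, cited in the introduction) that $\sigma_jf(x)\to f(x)$ at every Vilenkin--Lebesgue point, and closes with a Toeplitz-type splitting at an index $N_0$. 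You instead try to re-derive that Fej\'er-level fact from the coset structure of $K_{M_j}$; note also that for part a) this forces you to assume \eqref{fn01}, which is not among the stated hypotheses, since Lemma \ref{lemma0nn} requires it.

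The concrete error is in your majorization of $M_j\vert K_{M_j}\vert$, which is false by a factor of $M_j$. The correct coset description on a bounded Vilenkin group is $\vert K_{M_j}\vert\le c\left(M_j\mathbf{1}_{I_j}+\sum_{s=0}^{j-1}\sum_{r_s=1}^{m_s-1}M_s\mathbf{1}_{I_j(r_se_s)}\right)$, so $M_j\vert K_{M_j}\vert$ has size $M_jM_s$, not $M_s$, on the coset $I_j(r_se_s)$. A quick sanity check: by \eqref{fn40}--\eqref{fn4} one has $\Vert M_jK_{M_j}\Vert_1\sim M_j$, whereas your claimed majorant has $L_1$ mass $\sum_{s<j}M_s(m_s-1)/M_j\le 1$; you also drop the $I_j$ piece of the support, where $\vert K_{M_j}\vert\sim M_j$. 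Hence your displayed bound $\vert t_nf(x)-f(x)\vert\le\frac{c}{n}\sum_{j=0}^{\vert n\vert}W_jf(x)$ is unjustified; what the kernel structure actually yields is
\begin{equation*}
\vert t_nf(x)-f(x)\vert\le\frac{c}{n}\sum_{j=0}^{\vert n\vert}M_j\left(W_jf(x)+M_j\int_{I_j(x)}\vert f(t)-f(x)\vert\,d\mu(t)\right).
\end{equation*}
This corrected bound can still be pushed through, but not by your closing step: $\vert n\vert/n\to0$ handles only the unweighted sum, while here one must treat $M_j/n$ as Toeplitz weights, using $\sum_{j\le\vert n\vert}M_j\le cM_{\vert n\vert}\le cn$, and, in addition, one needs the Lebesgue-average term $M_j\int_{I_j(x)}\vert f-f(x)\vert\,d\mu\to0$, i.e. that $x$ is also an ordinary Lebesgue point --- something the Vilenkin--Lebesgue hypothesis alone does not obviously provide and which your argument never addresses. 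The paper's detour through \eqref{2bbb} and the cited Fej\'er result avoids both difficulties, which is precisely what that reduction buys.
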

\begin{proof} Let $\{q_k:k\in \mathbb{N}\}$ be a non-decreasing sequence.
Corollary \ref{Corollary3nn} immediately follows the stated norm  and pointwise convergences. Suppose that $x$ is either a point of continuity or Vilenkin-Lebesgue point of the function $f\in L_p(G_m).$ Then
$
\underset{n\rightarrow \infty }{\lim }\vert\sigma_nf(x)-f(x)\vert=0.
$
Hence, by combining \eqref{2b} and \eqref{2bbb} we can conclude that

	\begin{eqnarray*}
		&&\vert t_nf(x)-f(x)\vert \\
		&\leq&\frac{1}{Q_n}\left(\overset{n-2}{\underset{j=1}{\sum}}\left(q_{n-j}-q_{n-j-1}\right)j\vert\sigma_jf(x)-f(x)\vert+q_0n\vert\sigma_nf(x)-f(x)\vert\right)\\
		&\leq&\frac{1}{Q_n}\overset{n-2}{\underset{j=0}{\sum}}\left(q_{n-j}-q_{n-j-1}\right)j\alpha_j+\frac{q_0n\alpha_n}{Q_n}:=I+II, \  \text{	where  } \  \alpha_n\to 0, \  \text{as} \  n\to\infty.
	\end{eqnarray*}
To prove $I\to 0, \ \text{as} \ n\to\infty$ and $II\to 0, \ \text{as} \ n\to\infty,$ we just have to use analogous steps of Corollary \ref{Corollary3nn}. It follows that Part a) is proved.

Let sequence is non-increasing, satisfying condition \eqref{fn0}. 
According to Corollary \ref{corollary3n9} we get norm convergence and pointwise  convergence.
To prove  convergence in Vilenkin-Lebesgue points we use estimations \eqref{2b} and \eqref{2bbb} to obtain that
\begin{eqnarray*}
\vert t_nf-f(x)\vert \leq\frac{1}{Q_n}\overset{n-2}{\underset{j=0}{\sum}}\left(q_{n-j-1}-q_{n-j}\right)j\alpha_j+\frac{q_0n\alpha_n}{Q_n} 
		=III+IV \  \text{	where  } \  \alpha_n\to 0, \  \text{as} \  n\to\infty.
\end{eqnarray*}

It is evident that
$IV\leq\frac{q_{0}n\alpha_n}{Q_n}\leq C \alpha_n\to 0, \ \ \text{as} \ \ n\to\infty.$	
Moreover, for any $\varepsilon>0$ there exists $N_0\in \mathbb{N},$ such that $\alpha_n< \varepsilon$ when $n>N_0.$ It follows that
	\begin{eqnarray*}
		&&\frac{1}{Q_n}\overset{n-2}{\underset{j=1}{\sum}}\left(q_{n-j-1}-q_{n-j}\right)j\alpha_j \\
		&=&\frac{1}{Q_n}\overset{N_0}{\underset{j=1}{\sum}}\left(q_{n-j-1}-q_{n-j}\right)j\alpha_j
		+\frac{1}{Q_n}\overset{n-2}{\underset{j=N_0+1}{\sum}}\left(q_{n-j-1}-q_{n-j}\right)j\alpha_j=III_1+III_2.
	\end{eqnarray*}
	Since sequence is non-increasing, we can conclude that 
	$\vert q_{n-j}-q_{n-j-1}\vert<2q_{0}.$  Hence,
	\begin{eqnarray*}
		III_1\leq\frac{2q_{0}N_0}{Q_n}\to 0, \ \ \ \text{as} \ \ \ n\to \infty
	\end{eqnarray*}
	and
\begin{eqnarray*}
III_2&\leq&\frac{1}{Q_n}\overset{n-2}{\underset{j=N_0+1}{\sum}}\left(q_{n-j-1}-q_{n-j}\right)j\alpha_j\leq\frac{\varepsilon(n-1)}{Q_n}\overset{n-2}{\underset{j=N_0+1}{\sum}}\left(q_{n-j}-q_{n-j-1}\right) \\
&\leq& \frac{\varepsilon(n-1)}{Q_n}\left(q_{0}-q_{n-N_0}\right)\leq \frac{2q_0\varepsilon(n-1)}{Q_n}<C\varepsilon.
\end{eqnarray*}
	The proof of part b)  is also complete.
\end{proof}
\begin{corollary}
	Let $f\in L_p,$ where  $p\geq 1.$ Then%
	\begin{eqnarray*}
\sigma_{n}f &\rightarrow& f,\text{ \ \ \ a.e., \ \ \  as \ }n\rightarrow \infty,\ \  \ \ \ \ \ \
		\sigma_n^{\alpha}f \rightarrow f,\text{ \ \ \  a.e., \ \ \ as \ } n\rightarrow
		\infty,\\ 
		V_{n}f &\rightarrow& f,\text{ \ \ \ a.e., \ \ \  as \ } n\rightarrow
		\infty ,\ \  \ \ \ \ \ \
		B^{\alpha,\beta}_{n}f \rightarrow f,\text{ \ \ \ a.e., \ \ \ as \ }n\rightarrow
		\infty. 
	\end{eqnarray*}
\end{corollary}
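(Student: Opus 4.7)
The plan is to recognize each of the four summability means as a specific instance of the N\"orlund or $T$ means covered by Theorem \ref{Corollary3nnconv}, and then to pass from convergence at Vilenkin-Lebesgue points to a.e.\ convergence by invoking the fact, recalled in the introduction, that for any integrable $f$ almost every point of $G_m$ is a Vilenkin-Lebesgue point of $f$.

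First I would enumerate the generating sequences $\{q_k\}$: constant $q_k\equiv 1$ for the Fej\'er means $\sigma_n$, $q_k=A_k^{\alpha-1}$ for the Ces\`aro means $\sigma_n^{\alpha}$, $q_0=1$ and $q_k=k^{\alpha-1}$ $(k\geq 1)$ for $V_n^{\alpha}$, and $q_k=\log^{(\beta)}k^{\alpha}$ for $B_n^{\alpha,\beta}$. The first and last are non-decreasing and thus fall under part a) of the theorem, while the middle two are non-increasing and fall under part b). In each case the corresponding regularity assumption has already been recorded in the preliminaries, e.g.\ $q_{n-1}/Q_n=1/n$ for Fej\'er, $q_{n-1}/Q_n=O(1/n)$ for $V_n^{\alpha}$, and the analogous asymptotics for the logarithmic weight in $B_n^{\alpha,\beta}$. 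With these identifications, Theorem \ref{Corollary3nnconv} delivers convergence of each of the four means to $f$ at every Vilenkin-Lebesgue point of $f\in L_p(G_m)$.

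Since for any $f\in L_p(G_m)\subset L_1(G_m)$ (using $p\geq 1$ and $\mu(G_m)=1$) almost every point is a Vilenkin-Lebesgue point of $f$, as discussed in the introduction with reference \cite{goggog}, the a.e.\ convergence statement follows at once, so there is effectively nothing more to prove beyond the routine matching of hypotheses. The only subtlety worth flagging is that $B_n^{\alpha,\beta}$ is a $T$-mean rather than a strict N\"orlund mean, so one has to invoke the $T$-mean counterpart of Theorem \ref{Corollary3nnconv}; this counterpart is obtained by repeating the same Abel-transform decomposition \eqref{2bbb} with the kernels $F^{-1}_n$ in place of $F_n$, and by using Lemma \ref{lemma0nnT11} (together with its non-decreasing analogue, proved exactly as Corollary \ref{Corollary3nn}) in the role of Corollary \ref{Corollary3nn}. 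No fundamentally new estimate is required, and the main (mild) obstacle is simply this bookkeeping between the N\"orlund and $T$-mean versions.
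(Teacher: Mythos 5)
Your overall strategy --- specialize Theorem \ref{Corollary3nnconv} to each generating sequence and then pass to a.e.\ convergence via the fact from \cite{goggog} that almost every point of $G_m$ is a Vilenkin-Lebesgue point of an integrable function --- is exactly the derivation the paper intends (the corollary is stated there with no written proof), and your flag that $B_n^{\alpha,\beta}$ is a $T$ mean requiring a non-decreasing analogue of Lemma \ref{lemma0nnT11} is a real point the paper glosses over. However, your verification of the hypotheses fails for the two non-increasing cases. Part b) of Theorem \ref{Corollary3nnconv} requires condition \eqref{fn0}, that is $1/Q_n=O\left(1/n\right)$, and \emph{not} the condition $q_{n-1}/Q_n=O\left(1/n\right)$ that you quote; the latter is \eqref{fn01}, the hypothesis of part a) for non-decreasing sequences. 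For the Ces\`aro means one has $q_k=A_k^{\alpha-1}$ and $Q_n=A_{n-1}^{\alpha}\sim c\,n^{\alpha}$, and for $V_n^{\alpha}$ one has $Q_n\sim n^{\alpha}/\alpha$; since $0<\alpha<1$, in both cases $n/Q_n\sim c\,n^{1-\alpha}\rightarrow\infty$, so \eqref{fn0} fails and part b) simply does not apply. This is not a removable technicality within the paper's toolkit: the kernel estimate of Lemma \ref{lemma0nn1} genuinely uses \eqref{fn0}, since for non-increasing $q_k$ the Abel transformation produces the factor $\left(2q_0-q_{n-1}\right)/Q_n$, which is $O\left(1/n\right)$ only when $Q_n\geq cn$.

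Consequently, for $\sigma_n^{\alpha}$ and $V_n^{\alpha}$ your proof (and, to be fair, the paper's implicit one) has a hole: their a.e.\ convergence cannot be extracted from Theorem \ref{Corollary3nnconv} as stated. One must instead invoke the results recalled in the introduction --- the boundedness of $\sigma^{\alpha,\ast}$ from $L_1$ to weak-$L_1$ due to Weisz \cite{We1}, or the treatment of N\"orlund means with non-increasing coefficients satisfying only $q_{n-1}/Q_n=O\left(1/n\right)$ in \cite{PTW} --- or prove a variant of Lemma \ref{lemma0nn1} under that weaker condition, which requires kernel estimates not established in this paper. The Fej\'er case ($q_k\equiv 1$, part a) with $q_{n-1}/Q_n=1/n$) and, modulo the bookkeeping you describe for the $T$-mean version, the $B_n^{\alpha,\beta}$ case are correctly handled in your proposal.
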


\begin{theorem}\label{Corollaryconv4} Let $p\geq 1$ and $\{q_k:k\in \mathbb{N}\}$ be a sequence of non-increasing numbers. Then, for all  $f\in L_p(G_m)$,
	$$\Vert t_{M_n} f-f\Vert_p \to 0 \ \ \text{as}\ \ n\to \infty$$
	Moreover, for all Lebesgue points of $f\in L_p(G_m)$,
	\begin{equation*}
	\underset{n\rightarrow \infty }{\lim }t_{M_n}f(x)=f(x).
	\end{equation*}
\end{theorem}
\begin{proof}
From Corollary \ref{lemma0nnT11} we immediately get norm convergence convergence. To prove a.e convergence we use  Lemma \ref{lemma0nnT121}  to write that
\begin{eqnarray*}
t_{M_n}f\left(x\right)&=&\underset{G_m}{\int}f\left(t\right)F_n\left(x-t\right) d\mu\left(t\right)\\
&=&\underset{G_m}{\int}f\left(t\right)D_{M_n}\left(x-t\right)d\mu\left(t\right)
-\underset{G_m}{\int}f\left(t\right)\psi_{M_n-1}(x-t)\overline{F^{-1}}_{M_n}(x-t)
=I-II.
\end{eqnarray*}
	By applying \eqref{smnvl} we get that $I=S_{M_n}f(x)\to f(x)$
	for all Lebesgue points of $f\in L_p(G_m)$.
	
According to $\psi_{M_n-1}(x-t)=\psi_{M_n-1}(x)\overline{\psi}_{M_n-1}(t)$ we can conclude that
	$$II=\psi_{M_n-1}(x)\underset{G_m}{\int}f\left(t\right)\overline{F^{-1}}_{M_n}(x-t)\overline{\psi}_{M_n-1}(t)d(t)$$
	By combining \eqref{concond} and Lemma \ref{lemma0nnT11} we find that function 
	$$f\left(t\right)\overline{F^{-1}}_{M_n}(x-t)\in L_p \ \ \text{ where} \ \  p\geq  1 \ \ \text{for any } \ \ x\in G_m, $$
	and $II$ is Fourier coefficients of integrable function. According to Riemann-Lebesgue Lemma we get that
	$II\to 0 \ \ \text{for any } \ \ x\in G_m.$
The proof is complete.
\end{proof}
\begin{corollary}
	Let $f\in L_1(G_m)$ is continuous at a point $x.$ Then
${{t}_{n}}f(x)\to f(x), \ \ \ \text{as}\ \  \ n\to\infty.$
\end{corollary}
\begin{corollary}
	Let $f\in L_{p}$, where  $p\geq 1.$  Then, for all Lebesgue points of $f,$
	\begin{eqnarray*}
		L_{M_n}f(x)\rightarrow f(x),\text{ \ \ as \ \ }n\rightarrow \infty,
\end{eqnarray*}
\end{corollary}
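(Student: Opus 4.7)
The plan is to apply Theorem \ref{Corollaryconv4} directly to the N\"orlund logarithmic means $L_n$. First, I would recognize $L_n$ as the N\"orlund mean $t_n$ corresponding to the weight sequence $q_k = 1/k$ for $k \geq 1$ (with the natural convention $q_0 = 0$). Indeed, comparing $L_nf = \frac{1}{l_n}\sum_{k=1}^{n-1}\frac{S_kf}{n-k}$ with the general formula $t_nf = \frac{1}{Q_n}\sum_{k=1}^{n}q_{n-k}S_kf$, the substitution $j=n-k$ gives exactly this identification, and $Q_n = l_n$ in this case.

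Second, I would verify the monotonicity hypothesis required by Theorem \ref{Corollaryconv4}: for $k \geq 1$, $q_k = 1/k$ is strictly decreasing, so $\{q_k\}_{k\geq 1}$ is non-increasing. The only weight not appearing nontrivially in $L_{M_n}f$ is $q_0$, which enters only through the vanishing summand $q_0 S_{M_n}f = 0$ in the defining sum of $t_{M_n}f$, so no genuine issue arises at the boundary index despite the mild mismatch with the assumption $q_0 > 0$ stated in Section~2. (The kernel identity in Lemma \ref{lemma0nnT121} and the $L_1$-type estimates in Lemma \ref{lemma0nnT11} are derived from the definition of $F^{-1}_n = \frac{1}{Q_n}\sum_{k=1}^{n}q_k D_k$ and use only non-increasingness of $\{q_k\}_{k\geq 1}$, which is precisely what we have.)

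Finally, Theorem \ref{Corollaryconv4} immediately yields $L_{M_n}f(x) = t_{M_n}f(x) \to f(x)$ at every Lebesgue point of $f \in L_p(G_m)$, which is the claimed statement. I do not foresee any additional obstacle, since the entire analytic machinery---the identity $F_{M_n}(x) = D_{M_n}(x) - \psi_{M_n-1}(x)\overline{F^{-1}}_{M_n}(x)$ from Lemma \ref{lemma0nnT121}, the uniform $L_1$-bound and vanishing tails of $F^{-1}_{M_n}$ from Lemma \ref{lemma0nnT11}, the a.e.\ convergence $S_{M_n}f \to f$ at Lebesgue points from \eqref{smnvl}, and the Riemann--Lebesgue cancellation of the trailing Fourier-coefficient term---has already been packaged inside the proof of Theorem \ref{Corollaryconv4}.
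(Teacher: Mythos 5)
Your proposal is correct and follows exactly the paper's route: the paper offers no separate argument for this corollary, deriving it immediately from Theorem \ref{Corollaryconv4} by viewing $L_n$ as the N\"orlund mean $t_n$ with the non-increasing weights $q_k=1/k$ $(k\geq 1)$, $Q_n=l_n$. Your extra remark that the convention $q_0=0$ is harmless---since the $k=n$ summand $q_0S_nf$ vanishes and Lemmas \ref{lemma0nnT121} and \ref{lemma0nnT11} only use monotonicity of $\{q_k\}_{k\geq 1}$---is a point the paper glosses over, and it is verified correctly.
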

\begin{corollary}
	Let $f\in L_1(G_m)$ is continuous at a point $x.$ Then
	$L_nf(x)\to f(x), \ \ \text{as}\ \ n\to\infty.$
\end{corollary}

\end{document}